\documentclass[12pt,twoside,reqno]{amsart}

\usepackage[OT1]{fontenc}
\usepackage{type1cm}

\usepackage{comment}
\usepackage{enumerate}

\usepackage{amsthm}
\RequirePackage{amsmath,amsfonts,amssymb,amsthm}

\RequirePackage[dvips]{graphicx}

\usepackage{psfrag}
\usepackage[usenames]{color}
  \numberwithin{equation}{section}

\usepackage[active]{srcltx}  

  \newcommand{\N}{\mathbb{N}}         
  \newcommand{\R}{\mathbb{R}}         
  \newcommand{\Z}{\mathbb{Z}}         

  \newcommand{\EE}{\mathbb{E}}        
  \newcommand{\PP}{\mathbb{P}}        

\newcommand{\jjj}{\mathtt{j}}

  \newcommand{\e}{\varepsilon}
  \newcommand{\wh}{\widehat}

  \newcommand{\cJ}{\mathcal{J}}
  \newcommand{\1}{\mathbf{1}}

  \newcommand{\AP}{\mathsf{AP}}
  \newcommand{\hdim}{\dim_\mathsf{H}}

  \DeclareMathOperator{\supp}{supp}   

  \newtheorem{thm}{Theorem}[section]
  \newtheorem{mainthm}{Theorem}
  
  \newtheorem{lemma}[thm]{Lemma}

  \theoremstyle{remark}

\addtolength{\hoffset}{-1.15cm}
\addtolength{\textwidth}{2.3cm}
\addtolength{\voffset}{0.45cm}
\addtolength{\textheight}{-0.9cm}

\pagestyle{headings}

\begin{document}

\title{Salem sets with no arithmetic progressions}

\author{Pablo Shmerkin}
\address{Department of Mathematics and Statistics, Torcuato Di Tella University, and CONICET, Buenos Aires, Argentina}
\email{pshmerkin@utdt.edu}
\thanks{The author was partially supported by Projects PICT 2011-0436 and PICT 2013-1393 (ANPCyT)}
\urladdr{http://www.utdt.edu/profesores/pshmerkin}

\keywords{Salem sets, arithmetic progressions, pseudo-randomness}

\subjclass[2010]{Primary: 11B25, 28A78, 42A38, 42A61}

\begin{abstract}
We construct compact Salem sets in $\R/\Z$ of any dimension (including $1$) which do not contain any arithmetic progressions of length $3$. Moreover, the sets can be taken to be Ahlfors regular if the dimension is less than $1$, and the measure witnessing the Fourier decay can be taken to be Frostman in the case of dimension $1$. This is in sharp contrast to the situation in the discrete setting (where Fourier uniformity is well known to imply existence of progressions), and helps clarify a result of {\L}aba and Pramanik on pseudo-random subsets of $\R$ which do contain progressions.
\end{abstract}

\maketitle

\section{Introduction and statement of results}

A fundamental notion of pseudo-randomness for sets $A\subset\Z/n\Z$ is Fourier uniformity in the sense that $|\widehat{\mathbf{1}}_A(k)|$ is small for all $k\neq 0$. This notion of pseudo-randomness is particularly well suited to study the presence of non-trivial $3$-term arithmetic progressions (from now on, 3-$\AP$s) inside $A$. For example, if $A\subset\Z/n\Z$ and
\begin{equation} \label{eq:uniformiti-implies-APs}
|\widehat{\mathbf{1}}_A(k)|< \frac{|A|^2}{n^2} - \frac{1}{n} \quad\text{for all } k\neq 0,
\end{equation}
then $A$ contains a 3-$\AP$, and similar considerations are crucial in Roth's proof of his famous theorem on $3$-$\AP$s inside sets of positive density. See \cite[Chapter 10]{TaoVu10} for details. It can be checked that \eqref{eq:uniformiti-implies-APs} holds for random sparse sets of size as small as $\delta n^{2/3}$.

For subsets of the circle $\R/\Z$, an analog notion of pseudo-randomness is Fourier decay rather than Fourier uniformity, and it is therefore natural to investigate whether a connection with $3$-$\AP$s still persists. Since a set of positive Lebesgue measure contains $\AP$s of any length (as a consequence of the Lebesgue density lemma), one is mostly interested in sparse sets, particularly sets of fractional Hausdorff dimension. It is well known that the speed of Fourier decay of a measure is limited by the ``size'' of the measure (by a measure we always mean a Borel probability measure). For example, if $\mu$ is a measure on $\R/\Z$ and $\mu(A)>0$ for some Borel set $A$ with $\hdim(A)<t<1$ (we denote Hausdorff dimension by $\hdim$), then
\[
\sum_{k\in\Z\setminus\{0\}} |\wh{\mu}(k)|^2 |k|^{t-1} = +\infty.
\]
In particular, if $|\wh{\mu}(k)|\le C |k|^{-\sigma/2}$, then $\sigma\le t$. This leads to the notion of \emph{Salem set}: a Borel set $A\subset\R/\Z$ is a Salem set if for every $t\in (0,\hdim A)$ there exists a measure $\mu$ with $\supp(\mu)\subset A$ and such that $|\wh{\mu}(k)|\le C_t |k|^{t/2}$. See \cite[Chapter 3]{Mattila15} for background on the Fourier transform of measures, its relationship with dimension, and further discussion on Salem and related sets.

Salem sets, and more generally measures with large Fourier decay, indeed exhibit pseudo-random behavior in some arithmetic aspects. For example, if $A$ is Salem, then
\[
\hdim(A+B) = \min(\hdim(A)+\hdim(B),1)
\]
for any Borel set $B\subset\R/\Z$. This follows from Frostman's Lemma (applied to $B$) together with the Fourier transform expression for the energy, see \cite[Theorem 3.10]{Mattila15}.  As another example, Mockenhaupt \cite{Mockenhaupt00} (other than the endpoint) and Bak and Seeger \cite{BakSeeger11} (at the endpoint) showed that measures with Fourier decay that, additionally, have a suitable uniform mass decay, satisfy a restriction theorem analog to the Stein-Tomas restriction theorem for the sphere (in fact, the Stein-Tomas Theorem is a special case of the Mockenhaupt-Bak and Seeger results - note that the sphere $S^{d-1}$ is a Salem set in $\R^d$).

So what is the connection between Fourier decay and $3$-$\AP$s for subsets of the circle or the real line?

A partial answer was provided by I. {\L}aba and M. Pramanik in \cite[Theorem 1.2]{LabaPramanik09}. For clarity, we do not state their result in full generality.
\begin{thm}[{\L}aba and Pramanik] \label{thm:LabaPramanik}
Given $C_1, C_2>0$ there exists $\e_0=\e_0(C_1,C_2)>0$ such that the following holds: if  $\beta>4/5, \alpha>1-\e_0$, and $\mu$ is a measure on $[0,1]$ such that
\begin{enumerate}
\item $\mu(x,x+r) \le C_1 r^\alpha$ for all $x\in \R/\Z$ and all $r\in (0,1)$,
\item $|\wh{\mu}(k)| \le C_2 |k|^{-\beta/2}$ for all $k\neq 0$,
\end{enumerate}
then $\supp(\mu)$ contains a $3$-$\AP$.
\end{thm}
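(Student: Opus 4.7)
The natural plan is to detect $3$-APs through a positive lower bound on a smoothed trilinear counting functional, and then use a compactness argument to pass to the support of $\mu$ itself. Fix a nonnegative smooth bump $\eta$ on $\R/\Z$ with $\int \eta > 0$ and supported away from $0$ (in the circle metric), let $\psi_\delta$ be a standard approximation to the identity at scale $\delta$, and set $\mu_\delta := \mu*\psi_\delta$. Consider
\[
T_\delta \;:=\; \int\!\!\int \mu_\delta(x)\,\mu_\delta(x+r)\,\mu_\delta(x+2r)\,\eta(r)\,dx\,dr.
\]
If one can show $\liminf_{\delta\to 0}T_\delta>0$, then for arbitrarily small $\delta$ there exist $x_\delta$ and $r_\delta\in\supp(\eta)$ (so bounded away from $0$) with $x_\delta,x_\delta+r_\delta,x_\delta+2r_\delta\in\supp(\mu)+[-\delta,\delta]$. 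Compactness of $\supp(\mu)$ and a subsequence limit then produce a nontrivial $3$-AP inside $\supp(\mu)$.

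The next step is to expand $T_\delta$ in Fourier series on $\R/\Z$. A direct computation (integrating in $x$ to enforce $k_1+k_2+k_3=0$, then in $r$) gives
\[
T_\delta \;=\; \sum_{k_2,k_3\in\Z} \wh{\mu_\delta}(-k_2-k_3)\,\wh{\mu_\delta}(k_2)\,\wh{\mu_\delta}(k_3)\,\wh{\eta}(-k_2-2k_3).
\]
The $(k_2,k_3)=(0,0)$ contribution equals $\wh{\eta}(0)=\int\eta>0$ and serves as the main term. The entire burden is then to prove that the remaining sum, the ``error'' $E_\delta$, satisfies $|E_\delta|<\tfrac{1}{2}\int\eta$ uniformly in small $\delta$.

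To estimate $E_\delta$ I would use the two available inputs in complementary roles. Since $(k_2,k_3)\ne(0,0)$ forces at least one of $k_2,\,k_3,\,-k_2-k_3$ to be nonzero, I would split the sum into three pieces according to which frequency has the largest modulus, apply a dyadic decomposition there, and on the ``largest'' factor use the Fourier decay hypothesis $|\wh\mu(k)|\le C_2|k|^{-\beta/2}$ (gaining a negative power of the large frequency). The remaining two factors are controlled by the weighted $\ell^2$ bound
\[
\sum_{k\ne 0}|\wh\mu(k)|^2\,|k|^{s-1} \;\lesssim\; I_s(\mu) \;<\;\infty\qquad \text{for every } s<\alpha,
\]
which follows from the Frostman condition via the standard energy formula. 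The rapid decay of $\wh\eta$ (smoothness of $\eta$) couples $k_2$ and $k_3$ through the factor $\wh\eta(-k_2-2k_3)$, essentially forcing $k_2\approx -2k_3$ and collapsing the double sum to a single dyadic sum. The factors $\wh{\psi_\delta}$ are bounded by $1$ and converge pointwise to $1$, so they present no additional difficulty and, by dominated convergence, ensure the limit $\delta\to 0$ is well behaved.

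The principal obstacle is arithmetic: arranging the three bounds so that the exponents combine to a \emph{summable} error. Schematically, one needs the Fourier decay $\beta/2$ on one factor together with the energy gain of order $\alpha$ on the other two to beat the $|k|$-factors arising from Plancherel on weighted sums; this is exactly why the hypotheses $\beta>4/5$ and $\alpha>1-\e_0$ (equivalently, $\alpha$ very close to $1$) appear. I expect this balance to be reminiscent of, and in fact to mimic, the dualized Mockenhaupt--Bak--Seeger restriction inequality, which is the natural trilinear analogue of the sphere restriction theorem. Once the dyadic error bound is in place and $\e_0$ is chosen small enough in terms of $C_1,C_2$, one concludes $T_\delta\ge \tfrac{1}{2}\int\eta$ for all small $\delta$, and the compactness argument above yields the desired $3$-AP in $\supp(\mu)$.
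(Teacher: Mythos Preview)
The paper does not prove this theorem at all: it is quoted from \cite{LabaPramanik09} (as Theorem~1.2 there) purely as background, to motivate the main results Theorems~\ref{mainthm:dimless1} and~\ref{mainthm:dimeq1}. There is therefore no ``paper's own proof'' to compare against.

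For what it is worth, your outline matches the overall architecture of the original {\L}aba--Pramanik argument: one studies a trilinear functional counting (smoothed) $3$-$\AP$s, expands in Fourier series, isolates the main term $\wh\eta(0)$, and shows the tail is dominated by it using the Frostman and Fourier-decay hypotheses. But what you have written is a plan, not a proof: the only nontrivial step --- the error estimate --- is described at the level of ``split into dyadic pieces, put the Fourier decay on the largest factor, control the other two by energy, and hope the exponents add up'' without actually carrying out the bookkeeping. In particular you do not explain where the specific threshold $\beta>4/5$ comes from, nor how $\e_0$ depends on $C_1,C_2$, and the appeal to ``the dualized Mockenhaupt--Bak--Seeger restriction inequality'' is gestural rather than a concrete input. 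If you want this to be a proof rather than a sketch, you need to write down the dyadic sum, insert the two bounds, and verify summability under the stated hypotheses; that computation is the entire content of the theorem.
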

See also \cite{CLP14} for a generalization to higher dimensions and  more general patterns. While, informally, the above result can be interpreted as saying that large Fourier decay together with large power mass decay imply the existence of progressions, the actual statement is much more subtle because of the dependence of the required decay on the constants involved. Hence, this result left open the question of whether large Fourier decay alone, or at least in combination with large mass decay (that is, regardless of the constants $C_1, C_2$), is enough to force the existence of progressions. The purpose of this article is to give an emphatic negative answer to this question.

\begin{mainthm} \label{mainthm:dimless1}
For every $t\in (0,1)$, there exists a Borel probability measure $\mu$ on $\R/\Z$ (or on $[0,1]$) such that:
\begin{enumerate}
\item[\textsf{(i)}] For each $\sigma<t/2$ there exists $C_\sigma>0$ such that
\[
|\wh{\mu}(k)| \le C_\sigma |k|^{-\sigma} \quad\text{for all }k\in\Z\setminus \{0\}.
\]
\item[\textsf{(ii)}] There exists a constant $C>0$ such that
\begin{align*}
\mu(x-r,x+r)&\le C r^t \quad\text{for all } x\in\R/\Z, r\in (0,1]\\
\mu(x-r,x+r)&\ge C^{-1} r^t \quad\text{for all } x\in\supp(\mu), r\in (0,1].
\end{align*}
\item[\textsf{(iii)}] The topological support of $\mu$ contains no non-trivial arithmetic progressions of length $3$.
\end{enumerate}
\end{mainthm}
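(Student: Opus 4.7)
I propose to construct $\mu$ as a random infinite convolution of discrete measures supported on a base-$M$ Cantor scheme, combining Behrend's $3$-AP-free sets as combinatorial scaffolding with independent randomization at each scale for the Fourier decay. Fix $t\in(0,1)$. By Behrend's construction, for every sufficiently large integer $M$ there is a $3$-AP-free set $B_M\subset\{0,1,\dots,\lfloor M/4\rfloor\}$ of size $|B_M|\ge 2N$, where $N:=\lceil M^t\rceil$. Choose $M$ large (depending on the target decay). Independently for each $n\ge 1$, let $A_n$ be a uniformly random $N$-subset of $B_M$, and define
\[
\mu_n := \frac{1}{N}\sum_{a\in A_n}\delta_{aM^{-n}}, \qquad \mu := \mu_1*\mu_2*\mu_3*\cdots.
\]
The support of $\mu$ is a Cantor set of the expected self-similar type, with $N$ uniformly spaced children of width $M^{-(n+1)}$ inside every parent interval of width $M^{-n}$; this immediately yields condition (ii) (Ahlfors $t$-regularity) with constants depending only on $M$.

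\emph{AP-freeness (iii).} Suppose $x,y,z\in\supp(\mu)$ satisfy $x+z=2y$. Writing $x=\sum x_n M^{-n}$, etc., and setting $e_n:=x_n+z_n-2y_n$, the constraint $x_n,y_n,z_n\in A_n\subset\{0,\dots,\lfloor M/4\rfloor\}$ gives $|e_n|\le M/2$, while $\sum_n e_n M^{-n}=0$. For every $L\ge 1$,
\[
\Bigl|\sum_{n=1}^L e_n M^{L-n}\Bigr| = \Bigl|\sum_{n>L} e_n M^{L-n}\Bigr| \le \frac{M}{2(M-1)} < 1;
\]
the left side is an integer, so it vanishes, and induction on $L$ forces $e_n=0$ for all $n$. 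Then $(x_n,y_n,z_n)$ forms a $3$-AP in the AP-free set $A_n$, so $x_n=y_n=z_n$ and hence $x=y=z$.

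\emph{Fourier decay (i).} The product structure gives $\wh\mu(k)=\prod_{n\ge 1}\wh{\mu_n}(k)$. For $k\in\Z\setminus\{0\}$ define the critical scale $n^*(k)$ by $M^{n^*-1}\le|k|<M^{n^*}$. A second-moment computation exploiting the independent random choice of $A_{n^*}\subset B_M$ gives
\[
\EE\bigl[|\wh{\mu_{n^*}}(k)|^2\bigr] \le \frac{C}{N} + \Bigl|\frac{1}{|B_M|}\sum_{b\in B_M} e^{-2\pi i k b M^{-n^*}}\Bigr|^2,
\]
where, under appropriate control of the second (deterministic) term discussed below, this is $\le C' M^{-t}$. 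Bounding the remaining factors trivially by $1$, a dyadic union bound over $k$ combined with an exponential concentration inequality (to beat the polynomial-in-$|k|$ loss) shows that, for almost every realization of $\mu$, $|\wh\mu(k)|\le C_\sigma |k|^{-\sigma}$ for all $\sigma<t/2$ and all $k\ne 0$, provided $M$ is chosen large enough in terms of $\sigma$.

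\emph{Main obstacle.} The hardest step is the uniform-in-$k$ Fourier estimate, specifically the deterministic scaffold term: Behrend's set $B_M$ has its own non-trivial Fourier transform, which may contaminate the second-moment bound at frequencies aligned with the sphere-based structure of $B_M$. I would address this by further randomization --- for instance, by randomly choosing the Behrend parameters (radius of the underlying sphere, inner base) independently at each scale, or by introducing additional sub-scale random translations of the $A_n$ which preserve both AP-freeness (by the same carry-free argument) and Ahlfors regularity. Verifying that the enlarged randomization saturates the Salem exponent $t/2$ uniformly is the main technical work; the combinatorial and geometric parts (AP-freeness, Ahlfors regularity) are robust to any such modification.
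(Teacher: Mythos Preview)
Your carry-free argument for (iii) is correct, and (ii) is essentially fine (one minor point: with $N=\lceil M^t\rceil$ the measure is $t'$-regular for $t'=\log N/\log M>t$, so the \emph{lower} bound in (ii) with exponent exactly $t$ fails as $r\to 0$; the paper repairs this by alternating levels with $L_n=|X|$ and $L_n=1$). The Fourier step, however, has a structural gap that your proposed fixes do not close. Because you make one random choice $A_n$ per \emph{level}, $\mu$ is an infinite convolution, and along the subsequence $k=M^j$ one has $\wh{\mu_n}(M^j)=1$ for every $n\le j$, while for $n\ge j+2$ the restriction $A_n\subset[0,M/4]$ forces $|\wh{\mu_n}(M^j)|\ge 1-O(M^{j+1-n})$. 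Hence $|\wh\mu(M^j)|\ge c_M\,|\wh{\mu_{j+1}}(M^j)|$ with a deterministic $c_M>0$. The variables $|\wh{\mu_{j+1}}(M^j)|$ are i.i.d.\ in $j$ and deterministically positive (their real part is an average of nonnegative cosines, not all zero since $|A_{j+1}|\ge 2$), so by the second Borel--Cantelli lemma $\limsup_j|\wh\mu(M^j)|>0$ almost surely. Your measure is therefore not even Rajchman, let alone Salem: concentration at a single critical scale can at best deliver the constant $M^{-t/2}$, not $|k|^{-\sigma}$, and the per-level remedies you propose do not help --- random translations leave each $|\wh{\mu_n}(k)|$ unchanged, and randomizing Behrend parameters level by level still yields a convolution measure subject to the same obstruction.

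The paper's construction differs precisely on this point: it randomizes independently at every \emph{node} of the tree, assigning to each surviving interval $I_\jjj$ an independent uniform translate $X+\ell_\jjj\bmod M$ of a fixed Behrend-type set. Then $\wh\mu_{n+1}(k)-\wh\mu_n(k)$ is a sum of $L_1\cdots L_n\asymp M^{tn}$ independent mean-zero bounded terms, and Hoeffding's inequality gives failure probability $\exp\bigl(-c\,M^{(t-\sigma)n}\bigr)$, which beats the union bound over the $\sim M^n$ frequencies at scale $n$; telescoping over $n$ then yields (i). This node-wise randomization destroys the product structure, so your digit-by-digit argument for (iii) no longer applies verbatim, but AP-freeness survives via a minimality argument: the smallest basic interval containing a putative $3$-AP would, by the Behrend property of $X+\ell_\jjj$, have to pass the progression to a single child, contradicting minimality.
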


Note, in particular, that $\supp(\mu)$ is a Salem set of Hausdorff dimension $t$. Much more is true: the second condition says that $\mu$ is a $t$-Ahlfors-David regular measure. It will be clear from the proof that the constant $C$ in the second part blows up as $t\uparrow 1$, in consonance with Theorem \ref{thm:LabaPramanik}.

Clearly, this result cannot hold for $t=1$, since the second part would then imply that $\mu$ is absolutely continuous, whence its support would contain plenty of long progressions. Nevertheless, a slight weakening still holds:
\begin{mainthm} \label{mainthm:dimeq1}
There exists a Borel probability measure $\mu$ on $\R/\Z$ such that for each $t\in (0,1)$ there are constants $C_t, C'_t>0$ such that
\begin{enumerate}
\item[\textsf{(i)}] $|\wh{\mu}(k)| \le C_t |k|^{-t/2}$ for all $k\in\Z\setminus \{0\}$.
\item[\textsf{(ii)}] $\mu(x-r,x+r) \le C'_t r^{t}$ for all $x\in\R$, $r\in (0,1]$.
\item[\textsf{(iii)}] The topological support of $\mu$ contains no non-trivial arithmetic progressions of length $3$.
\end{enumerate}
\end{mainthm}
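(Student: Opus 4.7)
The plan is to build $\mu$ as a weighted superposition of rescaled copies of the measures supplied by Theorem~\ref{mainthm:dimless1}, placed on a sparse geometric skeleton so that each layer contributes its dimension and decay to the composite without creating new progressions. Fix a sequence $t_n\uparrow 1$ and let $\nu_n$ be the measure given by Theorem~\ref{mainthm:dimless1} with parameter $t_n$, supported on a set $E_n\subset[0,1]$ with no $3$-$\AP$s and satisfying the Fourier and Ahlfors bounds with constants $C_{n,\sigma}$ and $C_n$. Put $\phi_n(y)=3^{-n}+\delta_n y$ with $\delta_n$ tiny (say $\delta_n\le 3^{-n}/100$), $\widetilde\nu_n=(\phi_n)_*\nu_n$, and
\[
\mu=\sum_{n\ge 1}c_n\widetilde\nu_n,
\]
with positive weights $c_n$ summing to $1$ and decaying super-exponentially. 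All gadgets lie in $[0,1/3]$, so any $3$-$\AP$ in $\supp\mu\subset\R/\Z$ is a genuine $\AP$ in $\R$.

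First I would verify $\AP$-freeness of $\supp\mu=\{0\}\cup\bigcup_n\supp\widetilde\nu_n$ by a case analysis on the gadget indices of a putative nontrivial progression $a<b<c$. All three in one gadget is excluded by $\AP$-freeness of $E_n$; the cases $n_a=n_b>n_c$ or $n_a>n_b=n_c$ force the remaining point to lie near a value in the wrong gadget, a contradiction. If $n_a>n_b>n_c$ are all distinct, the relation $a+c=2b$ yields
\[
|3^{-n_a}+3^{-n_c}-2\cdot 3^{-n_b}|\le\delta_{n_a}+\delta_{n_b}+\delta_{n_c},
\]
but the left-hand side is at least $3^{-n_b}$ (since $3^{n_b-n_c}\ge 3$ is never equal to $2$), contradicting the smallness of the $\delta_n$'s. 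The edge case $a=0$ reduces to $2b=c$ in $\supp\mu\setminus\{0\}$, which fails because $2\cdot 3^{-n}$ is bounded away from every $3^{-m}$.

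For the analytic bounds, compute
\[
\wh{\widetilde\nu_n}(k)=e^{-2\pi i k\,3^{-n}}\,\wh{\nu_n}(k\delta_n),
\]
and for each $n$ with $t_n>t$ use
\[
|\wh{\widetilde\nu_n}(k)|\le C_{n,t/2}(1+|k|\delta_n)^{-t/2}\le C_{n,t/2}\,\delta_n^{-t/2}\,|k|^{-t/2}.
\]
For the Frostman condition, split by scale: for $r\le\delta_n$, Ahlfors regularity of $\nu_n$ gives $\widetilde\nu_n(B(x,r))\le C_n(r/\delta_n)^{t_n}\le C_n\delta_n^{-t}r^t$; for $r>\delta_n$, use the trivial bound $1\le\delta_n^{-t}r^t$. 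Summation then yields $|\wh\mu(k)|\le C_t|k|^{-t/2}$ and $\mu(B(x,r))\le C'_t r^t$ provided $\sum c_n C_{n,t/2}\delta_n^{-t/2}$ and $\sum c_n C_n\delta_n^{-t}$ converge, both of which are secured by the super-exponential decay of $c_n$.

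The principal obstacle is the diagonal selection of the parameters $(t_n,\delta_n,c_n)$: the constants $C_{n,\sigma}$ blow up as $\sigma\uparrow t_n/2$ and the Ahlfors constants generally deteriorate as $t_n\uparrow 1$, so $c_n$ must be small enough to absorb these losses \emph{uniformly} across every $t<1$ simultaneously. A secondary technical point is that $\wh{\nu_n}$ is evaluated at the noninteger frequency $k\delta_n$, so one should appeal to the fact that the construction behind Theorem~\ref{mainthm:dimless1} yields Fourier decay on all of $\R$, not only at integer frequencies.
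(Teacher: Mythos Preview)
Your superposition approach has a genuine gap that breaks both conditions \textsf{(i)} and \textsf{(ii)}, not just a parameter-tuning issue. Fix any $t\in(t_1,1)$. The gadget $\widetilde\nu_1$ is supported on an affine image of the $t_1$-Ahlfors regular set $E_1$, so for $x\in\supp\widetilde\nu_1$ and $r<\delta_1$ one has
\[
\mu(B(x,r))\ \ge\ c_1\,\widetilde\nu_1(B(x,r))\ \ge\ c_1 C_1^{-1}(r/\delta_1)^{t_1},
\]
and $(r/\delta_1)^{t_1}/r^{t}\to\infty$ as $r\to 0$. No choice of $c_1>0$ or of the later weights can repair this: the constant $C'_t$ in \textsf{(ii)} simply does not exist. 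The Fourier side fails for the same structural reason. Choose a smooth $\psi$ with $\psi\equiv 1$ on $\supp\widetilde\nu_1$ and $\supp\psi$ disjoint from the other gadgets; then $\psi\mu=c_1\widetilde\nu_1$, and if $|\wh\mu(k)|\le C_t|k|^{-t/2}$ held, convolving with the rapidly decaying $\wh\psi$ would give $|\wh{\widetilde\nu_1}(k)|\le C'|k|^{-t/2}$, forcing $\hdim(\supp\widetilde\nu_1)\ge t>t_1$, a contradiction. In short, any piece of dimension $t_1<1$ caps both the Frostman exponent and the Fourier decay of the whole measure at $t_1$; super-exponential decay of $c_n$ cannot compensate because the obstruction is local, not summable. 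Your step ``for each $n$ with $t_n>t$ use \ldots'' tacitly discards the finitely many bad $n$, but those terms are present in $\mu$ and are exactly the ones that dominate.

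The paper avoids this by \emph{not} gluing lower-dimensional pieces. It runs a single random Cantor construction with varying base $M_n=n$ and Behrend sets $X^{(n)}\subset\Z/n\Z$ satisfying $\log|X^{(n)}|/\log n\to 1$, so that
\[
\frac{\log(L_1\cdots L_n)}{\log(M_1\cdots M_n)}\to 1.
\]
Theorem~\ref{thm:Salem} then gives Fourier decay $|k|^{-\sigma/2}$ for every $\sigma<1$ directly, the Frostman bound for every $t<1$ follows from the uniform mass distribution, and the absence of $3$-$\AP$s is inherited from each $X^{(n)}$ via Lemma~\ref{lem:Behrend} by the same minimal-interval argument as in Theorem~\ref{mainthm:dimless1}. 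The point is that the dimension has to be built into a single homogeneous construction whose local scaling exponent tends to $1$; a countable union of fixed sub-unit-dimensional Salem sets cannot be Salem of dimension $1$.
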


In particular, $\supp(\mu)$ is a $1$-dimensional Salem set without progressions. We note that Keleti \cite{Keleti98, Keleti08} constructed compact sets $A\subset\R$ of Hausdorff dimension $1$ which do not contain $3$-$\AP$s (and indeed more general patterns such as rectangles $x, x+r, y, y+r$). It is unclear if his constructions are (or can be modified to be) Salem sets. Keleti's constructions are deterministic and it is notoriously difficult to establish that given deterministic sets support measures with any polynomial Fourier decay, let alone maximal decay.

Instead, we use Behrend's classical examples of large sets in $\Z/n\Z$ without progressions, together with a variant of a result from \cite{ShmerkinSuomala15}, asserting that a wide class of random measures has maximal Fourier decay. The randomization will consist in randomly translating (in a suitable sense) the Behrend-like sets in question. That is, we exploit the trivial observation that if a set in $\Z/n\Z$ has no progressions, then neither does any translation. On the other hand, these translations introduce enough uniformity in the measures to guarantee fast Fourier decay.

To conclude this introduction, we point out that the dual question of whether Salem sets can have \emph{more} additive structure than what one expects from their size was addressed by Hambrook and {\L}aba in \cite{HambrookLaba13}, as part of their proof of the sharpness of the restriction theorem of Mockenhaupt mentioned above. In particular, they construct Salem sets which, at many scales, contain very long progressions.

\bigskip

\textbf{Acknowledgment}. I thank I. {\L}aba for useful discussions.

\section{Proofs}

\subsection{A class of Salem measures}
\label{subsec:Salem}

In this section, we introduce a class of random measures and prove that a.s. they have large Fourier decay. Specific instances of this construction will later be used to prove Theorems \ref{mainthm:dimless1} and \ref{mainthm:dimeq1}. The construction is closely related to that in \cite[Section 6]{LabaPramanik09} even though, in that paper, the goal was to exhibit sets \emph{with} $3$-$\AP$s. It is also inspired by the large class of spatially independent martingales introduced in \cite{ShmerkinSuomala15}. Indeed, the measures that witness the validity of Theorem \ref{mainthm:dimless1} are spatially independent martingales, while the measures we use in Theorem \ref{mainthm:dimeq1} narrowly fail to be, but still enjoy many similar properties.

We fix sequences of integers $(L_n)_{n=1}^\infty$, $(M_n)_{n=1}^\infty$, with $1\le L_n \le M_n$ and $2\le M_n$ for all $n$. Later we will impose further conditions on these sequences; for now, we introduce some notation. Write $[M]=\{0,1,\ldots, M-1\}$, and define
\begin{align*}
 \Sigma_n &= \{ \jjj=(j_1\ldots j_n): j_i\in [M_i]\},\\
 \Sigma^* &= \cup_{n=0}^\infty \Sigma_n.
\end{align*}
We note that $\Sigma_0$ consists of the empty word $\varnothing$. To each $\jjj=(j_1\ldots j_n)\in\Sigma_n$ we associate the interval
\[
I_\jjj = \left[\sum_{i=1}^{n}  \frac{j_{i}}{M_1\cdots M_i}, \frac{1}{M_1\ldots M_n}+\sum_{i=1}^{n} \frac{j_{i} }{M_1\cdots M_i} \right).
\]
These are the basic intervals in the Cantor series expansion associated to the sequence $(M_n)$. Our random measures will be, essentially, projections via this Cantor series expansion of measures defined on the tree $\Sigma^*$.

Let $(X_\jjj)_{\jjj\in \Sigma^*}$ be a family of independent random variables satisfying the following properties for each $\jjj\in\Sigma_n$:
\begin{enumerate}
\renewcommand{\labelenumi}{(X\arabic{enumi})}
\renewcommand{\theenumi}{X\arabic{enumi}}
\item \label{it:size} Almost surely, $X_\jjj$ is a subset of $[M_{n+1}]$ with $|X_\jjj|=L_{n+1}$.
\item \label{it:martingale} For each $a\in [M_{n+1}]$, $\PP(a\in X_\jjj)=L_{n+1}/M_{n+1}$.
\end{enumerate}
We underline that we do \emph{not} assume that $X_\jjj$ is chosen uniformly among all subsets of $[M_{n+1}]$ of size $L_{n+1}$ (as in \cite[Section 6]{LabaPramanik09}); this additional flexibility in choosing $X_\jjj$ will be key in avoiding progressions.

These random variables yield a random subtree of $\Sigma^*$ in the natural way:
\[
\mathcal{J}_{n} = \{  (j_1\ldots j_n): j_{i+1}\in X_{j_1\ldots j_i} \text{ for all } j=0,\ldots,n-1 \}.
\]
From \eqref{it:size} and \eqref{it:martingale}, we have
\[
|\mathcal{J}_n| =L_1\cdots L_n \quad\text{almost surely},
\]
and
\begin{equation} \label{eq:symbolic-martingale}
\mathbb{P}((\jjj a)\in\mathcal{J}_{n+1}) = \mathbf{1}(\jjj\in \mathcal{J}_n) \frac{L_{n+1}}{M_{n+1}}\quad\text{for all } a\in [M_{n+1}].
\end{equation}
(We denote by $\mathbf{1}(\cdot)$ the indicator function of both events and sets.) Further, let us define the functions
\begin{equation} \label{eq:defmu}
\mu_n = \frac{M_1\cdots M_n}{L_1\cdots L_n} \, \sum_{\jjj\in\cJ_n} \1(I_\jjj).
\end{equation}
By the Caratheodory Extension Theorem, there exists a measure $\mu$ on $[0,1]$ (which we may identify with $\R/\Z$) such that
\[
\mu(I_\jjj) = \mu_n(I_\jjj) =  \frac{1}{L_1\cdots L_n} \quad\text{for all }\jjj\in \cJ_n.
\]
In particular, $\mu_n\to \mu$ weakly.

\begin{thm} \label{thm:Salem}
Suppose that the sequence $(M_n)$ satisfies
\begin{equation} \label{eq:condition-on-M}
\lim_{n\to\infty} \frac{\log M_{n+1}}{\log(M_1\cdots M_n)} = 0.
\end{equation}
Fix $\sigma>0$ such that
\begin{equation} \label{eq:condition-on-sigma}
\sigma < \liminf_{n\to\infty} \frac{\log(L_1\cdots L_n)}{\log(M_1\cdots M_n)}.
\end{equation}
Then almost surely there is $C>0$ such that
\[
|\wh{\mu}(k)| \le C\, |k|^{-\sigma/2} \quad\text{for all }k\in\Z\setminus\{0\}.
\]
\end{thm}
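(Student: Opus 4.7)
The plan is to exhibit $\wh{\mu}(k)$ as the terminal value of a mean-zero martingale in the randomness $(X_\jjj)$, apply Azuma--Hoeffding to obtain subgaussian tails, carefully estimate the variance proxy $S(k)$, and then union-bound over $k\in\Z\setminus\{0\}$.

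\emph{Martingale structure.} Let $F_m := \sigma(X_\iii : |\iii|<m)$. From \eqref{it:martingale} and the independence of the $X_\jjj$, one verifies $\EE[\mu_{n+1}|F_n]=\mu_n$; hence for fixed $k\in\Z\setminus\{0\}$, the sequence $(\wh{\mu_n}(k))$ is a complex mean-zero $F_n$-martingale converging pointwise to $\wh{\mu}(k)$ (by the weak convergence $\mu_n\to\mu$). Writing $N_m:=M_1\cdots M_m$, $T_m:=L_1\cdots L_m$, the contribution to $D_m := \wh{\mu_m}(k) - \wh{\mu_{m-1}}(k)$ from each $\jjj\in\cJ_{m-1}$ computes to
\[
D_{m,\jjj} \;=\; \frac{N_m\, B(k,N_m)}{T_m}\, e^{-2\pi i k c_\jjj}\sum_{a=0}^{M_m-1} \xi^\jjj_a\, e^{-2\pi i k a/N_m},
\]
where $B(k,N):=\int_0^{1/N}e^{-2\pi i kx}\,dx$, $c_\jjj$ is the left endpoint of $I_\jjj$, and $\xi^\jjj_a := \1(a\in X_\jjj)-L_m/M_m$.

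\emph{Azuma--Hoeffding.} Using $\sum_a|\xi^\jjj_a|\le 2L_m$ and $|N_m B(k,N_m)|\le C\min(1,N_m/|k|)$, each $|D_{m,\jjj}|\le C'\min(1,N_m/|k|)/T_{m-1}$; moreover $(D_{m,\jjj})_{\jjj\in\cJ_{m-1}}$ are independent and mean zero given $F_{m-1}$, since the $X_\jjj$ are independent across distinct $\jjj$. Azuma's inequality (on real and imaginary parts) then renders $D_m\mid F_{m-1}$ subgaussian with proxy variance $\sigma_m^2(k)\le C''\min(1,N_m^2/|k|^2)/T_{m-1}$, and standard iteration gives
\[
\PP\bigl(|\wh{\mu}(k)|>\lambda\bigr)\;\le\; 4\exp\!\bigl(-c\lambda^2/S(k)\bigr), \qquad S(k):=\sum_{m\ge 1}\sigma_m^2(k).
\]

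\emph{Estimating $S(k)$ and conclusion.} Fix $\sigma'\in(\sigma,\,\liminf\log T_m/\log N_m)$ and $\delta>0$ so small that $\sigma_1 := \min(\sigma'-2\delta,\,\sigma'/(1+\delta)) > \sigma$. By \eqref{eq:condition-on-sigma} and \eqref{eq:condition-on-M}, for all $m$ large enough $T_m\ge N_m^{\sigma'}$ and $M_{m+1}\le N_m^\delta$. For large $|k|$, let $m^*=m^*(k)$ denote the smallest index with $N_{m^*}\ge |k|$; then $N_{m^*}\le N_{m^*-1}^{1+\delta}$ forces $N_{m^*-1}\ge |k|^{1/(1+\delta)}$. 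For $m<m^*$, $\sigma_m^2\lesssim N_{m-1}^{2-\sigma'+2\delta}/|k|^2$, which grows geometrically in $m$ (ratio $M_m^{2-\sigma'+2\delta}\ge 2$) and hence sums to $\lesssim |k|^{-(\sigma'-2\delta)}$; for $m\ge m^*$, $\sigma_m^2\lesssim 1/N_{m-1}^{\sigma'}$ decreases geometrically and sums to $\lesssim 1/N_{m^*-1}^{\sigma'}\le |k|^{-\sigma'/(1+\delta)}$. Thus $S(k)\le C|k|^{-\sigma_1}$, and taking $\lambda=C_0|k|^{-\sigma/2}$ yields $\PP(|\wh{\mu}(k)|>C_0|k|^{-\sigma/2})\lesssim\exp(-cC_0^2|k|^{\sigma_1-\sigma})$, summable over $k\in\Z$. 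Borel--Cantelli (with $|\wh{\mu}(k)|\le 1$ absorbing finitely many exceptional $k$ into the constant) concludes. The key obstacle is this bound on $S(k)$: matching a scale $N_{m^*}$ to $|k|$ within a polynomial factor crucially relies on \eqref{eq:condition-on-M}. Notably, the bound $\sum_a|\xi^\jjj_a|\le 2L_m$ uses no cancellation in the inner Fourier sum over $a$; all decay arises from independence across distinct $\jjj$, which is precisely the flexibility later exploited to embed Behrend-like progression-free sets inside each $X_\jjj$.
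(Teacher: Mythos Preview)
Your argument is correct. The overall strategy---martingale increments $\wh{\mu}_m(k)-\wh{\mu}_{m-1}(k)$ decomposed over $\jjj\in\cJ_{m-1}$, Hoeffding-type concentration, then Borel--Cantelli---is the same as the paper's, but the execution differs in two places worth recording.

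First, the paper bounds each $Y_\jjj$ crudely by $2M_{n+1}/P_{n+1}$ (no dependence on $k$), applies Hoeffding only for $|k|<Q_{n+1}$, and then extends to all frequencies via the exact algebraic identity
\[
\wh{\1}_{I_\jjj}(k+Q_{n+1}\ell)=\frac{k}{k+Q_{n+1}\ell}\,\wh{\1}_{I_\jjj}(k),
\]
which exploits that $\mu_{n+1}$ is constant on intervals with endpoints in $Q_{n+1}^{-1}\Z$. You instead build the $k$-dependence directly into the increment bound through $|N_m B(k,N_m)|\lesssim\min(1,N_m/|k|)$ and never need this identity. Your route is arguably more transparent and would carry over unchanged to settings (e.g.\ higher dimensions or non-interval building blocks) where no such clean periodicity formula is available; the paper's route gives a sharper bound on each increment $|\wh{\mu}_{n+1}(k)-\wh{\mu}_n(k)|$ that holds deterministically once the good events $E_n$ occur.

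Second, the paper runs Borel--Cantelli over the \emph{scales} $n$ (showing the events $E_n$ eventually hold) and then telescopes to recover the bound for each $k$, whereas you run Borel--Cantelli directly over the \emph{frequencies} $k$ using the single subgaussian tail bound $\PP(|\wh{\mu}(k)|>C_0|k|^{-\sigma/2})\lesssim\exp(-c\,|k|^{\sigma_1-\sigma})$. Both are valid; the paper's version yields slightly more (uniform control of all increments simultaneously), while yours is shorter. Your key estimate $S(k)\lesssim |k|^{-\sigma_1}$ is exactly where conditions \eqref{eq:condition-on-M} and \eqref{eq:condition-on-sigma} enter, playing the same role as in the paper's telescoping step.
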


The proof of this theorem is a small variant of (and in some respects simpler than) \cite[Theorem 14.1]{ShmerkinSuomala15}, and is also closely related to the results in \cite[Section 6]{LabaPramanik09}. For completeness we give the full proof.

We recall Hoeffding's inequality \cite{Hoeffding63}:
\begin{lemma} \label{lem:Hoeffding}
Let $\{ Y_i: i\in J\}$ be zero mean independent complex-valued random variables satisfying $|Y_i|\le R$. Then for all $\kappa>0$,
\begin{equation} \label{eq:Hoeffding}
\mathbb{P}\left(\left|\sum_{i\in J} Y_i\right|> \kappa\right) \le 4\exp\left(\frac{-\kappa^2}{4 R^2 |J|}\right).
\end{equation}
\end{lemma}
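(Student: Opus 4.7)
The statement is the classical Hoeffding inequality lifted from real- to complex-valued random variables, and I would prove it by a standard two-step reduction. First, I recall (or reprove via the Chernoff method) the real-valued Hoeffding inequality: if $Z_i$, $i\in J$, are independent real mean-zero random variables with $|Z_i|\le R$, then
\[
\mathbb{P}\left(\Bigl|\sum_{i\in J} Z_i\Bigr| > \lambda\right) \le 2\exp\left(\frac{-\lambda^2}{2R^2 |J|}\right).
\]
This follows from Markov applied to $\exp(s\sum_i Z_i)$, factoring of the moment generating function by independence, Hoeffding's lemma $\mathbb{E}[\exp(sZ_i)]\le \exp(s^2 R^2/2)$, and optimization over $s>0$; a matching bound for $-\sum_i Z_i$ plus a union bound yields the two-sided version.

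Second, I would reduce the complex case to this by writing $Y_i = U_i + \sqrt{-1}\,V_i$ with $U_i,V_i$ real. The hypothesis $|Y_i|\le R$ forces $|U_i|,|V_i|\le R$; each of the families $(U_i)_{i\in J}$ and $(V_i)_{i\in J}$ is independent and zero mean (possible dependence between $U_i$ and $V_i$ for the same $i$ is irrelevant, since the real Hoeffding bound is applied to each family separately). Using the identity $|z|^2=(\mathrm{Re}\,z)^2+(\mathrm{Im}\,z)^2$, the event $\{|\sum_i Y_i|>\kappa\}$ is contained in $\{|\sum_i U_i|>\kappa/\sqrt 2\}\cup\{|\sum_i V_i|>\kappa/\sqrt 2\}$. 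Applying the real-valued bound with threshold $\kappa/\sqrt 2$ to each piece and union bounding gives $4\exp(-\kappa^2/(4R^2|J|))$, exactly as claimed.

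There is no genuine obstacle. The only computational input is Hoeffding's lemma on the moment generating function, which comes from the convexity estimate $e^{sx}\le \tfrac{R+x}{2R}e^{sR}+\tfrac{R-x}{2R}e^{-sR}$ on $[-R,R]$, followed by taking expectations (using $\mathbb{E}[Z_i]=0$) to obtain $\cosh(sR)$, and then the Taylor bound $\cosh(u)\le \exp(u^2/2)$. The only small bookkeeping point worth flagging is that splitting into real and imaginary parts costs a factor of $\sqrt 2$ in the threshold, which is precisely what turns the familiar $2R^2|J|$ denominator in the real-valued exponent into the $4R^2|J|$ appearing in the complex statement, and the factor of $2$ in the real-valued prefactor into the factor of $4$ stated here.
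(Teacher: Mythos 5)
Your proof is correct and follows exactly the route the paper indicates: cite the real-valued Hoeffding inequality and reduce the complex case by splitting into real and imaginary parts, which is what converts the usual $2\exp(-\kappa^2/(2R^2|J|))$ into $4\exp(-\kappa^2/(4R^2|J|))$. The paper does not spell out the constant-tracking, so your careful accounting of the $\sqrt{2}$ threshold loss and the doubled prefactor is a useful elaboration of the same argument rather than a different one.
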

We note that Hoeffding's inequality is usually stated for real-valued random variables, but the extension to complex-valued ones is immediate by looking at the real and imaginary parts.

\begin{proof}[Proof of Theorem \ref{thm:Salem}]
For notational simplicity, let
\begin{align*}
P_n &= L_1\cdots L_n,\\
Q_n &= M_1\cdots M_n.
\end{align*}

We write $e(x)=\exp(-2\pi i x)$ and define a (complex) measure $\eta_k = e(k x) dx$. Now
\[
\widehat{\mu}_{n+1}(k)-\widehat{\mu}_n(k)=\sum_{\jjj\in\Sigma_n}Y_\jjj = \sum_{\jjj\in\cJ_n} Y_\jjj,
\]
where
\[
Y_\jjj=\int_{I_\jjj}\mu_{n+1}-\mu_n\,d\eta_k.
\]
Note that, as a consequence of \eqref{eq:symbolic-martingale} and the definitions, $\EE(\mu_{n+1}(x)|\cJ_n)=\mu_n(x)$ for all $x$, so  $\EE(Y_\jjj|\cJ_n)=0$. Also, since $|\eta_k|\le 1$ and $|I_\jjj|=(M_1\cdots M_n)^{-1}$, from \eqref{eq:defmu} we get
\[
|Y_\jjj| \le \frac{2 M_{n+1}}{L_1\cdots L_n L_{n+1}}.
\]
Applying Hoeffding's inequality \eqref{eq:Hoeffding}, a short calculation yields
\[
\PP\left(|\widehat{\mu}_{n+1}(k)-\widehat{\mu}_n(k)| > Q_{n+1}^{-\sigma/2} \,|\, \cJ_n\right) \le 4 \exp\left(-\frac{L_{n+1}^2}{16 M_{n+1}^{2+\sigma}}   P_n   Q_{n}^{-\sigma}  \right).
\]
Since this bound holds uniformly, it also holds without conditioning on $\cJ_n$. Let
\begin{equation} \label{eq:Salem-large-dev}
\e_n = \PP\left(|\widehat{\mu}_{n+1}(k)-\widehat{\mu}_n(k)| > Q_{n+1}^{-\sigma/2} \text{ for some } |k|< Q_{n+1}\right).
\end{equation}
Recalling \eqref{eq:condition-on-M} and \eqref{eq:condition-on-sigma}, we see that if $n$ is large enough, then
\[
\e_n \le 8 Q_{n+1}\exp(-Q_n^\eta),
\]
for some $\eta=\eta(\sigma)>0$ (this is the point where the assumption on $\sigma$ gets used). Using  \eqref{eq:condition-on-M} again, together with $Q_n\ge 2^{-n}$, this shows that $(\e_n)$ is summable.

Let $\jjj\in\Sigma_{n+1}$. Since the endpoints of $I_\jjj$ are of the form $m/Q_{n+1}$, $m\in\Z$, it follows that for $|k|<Q_{n+1},0\neq\ell\in\mathbb{Z}$,
\[
\widehat{\mathbf{1}}_{I_\jjj}(k+Q_{n+1}\ell) = \frac{k}{k+ Q_{n+1}\ell} \,\widehat{\mathbf{1}}_{I_\jjj}(k)\,.
\]
Since $\widehat{\mu}_{n+1}$ is a linear combination of the functions $\widehat{\mathbf{1}}_{I_\jjj}$, the same relation holds between $\widehat{\mu}_{n+1}(k+Q_{n+1}\ell)$ and $\widehat{\mu}_{n+1}(k)$, and likewise for $\widehat{\mu}_{n}$. Hence
\[
\widehat{\mu}_{n+1}(k+Q_{n+1}\ell) - \widehat{\mu}_{n}(k+Q_{n+1}\ell) = \frac{k}{k+ Q_{n+1}\ell} \left( \wh{\mu}_{n+1}(k)-\wh{\mu}_n(k) \right).
\]

Note that we can write any frequency $k'$ with $|k'|\ge Q_{n+1}$ as $k'=k+Q_{n+1}\ell$, where $|k|<Q_{n+1}$, and $\ell\neq 0$. For such $k'$, we have
\[
|\widehat{\mu}_{n+1}(k')-\widehat{\mu}_n(k')| \le \frac{|k|}{|k'|}|\widehat{\mu}_{n+1}(k)-\widehat{\mu}_n(k)|
< \frac{Q_{n+1}}{|k'|}|\widehat{\mu}_{n+1}(k)-\widehat{\mu}_n(k)|.
\]
Let $E_n$ be the event
\[
\left|\widehat{\mu}_{n+1}(k)-\widehat{\mu}_n(k)\right|\le
\min\left(1,\frac{Q_{n+1}}{|k|} \right) Q_{n+1}^{-\sigma/2}\text{ for all }k\in\Z\,.
\]
We have seen that $\PP(E_n)>1-\e_n$, where $\e_n$ is defined in \eqref{eq:Salem-large-dev}. Since $\e_n$ was summable, by the Borel-Cantelli Lemma a.s. there is $n_0$ such that $E_n$ holds for all $n\ge n_0$.

Suppose $k$ is a frequency with $|k|\ge Q_{n_0}$. Choosing $n_1\in\N$ such that $Q_{n_1}\le|k|<Q_{n_1+1}$, and telescoping, for any $m>n_1$ we have
\begin{equation}\label{eq:telescope}
\begin{split}
\left|\widehat{\mu}_m(k)-\widehat{\mu}_{n_0}(k)\right|&\le  \sum_{n_0\le n< n_1} |\wh{\mu}_{n+1}(k)-\wh{\mu}_n(k)| + \sum_{n_1\le n<m} |\wh{\mu}_{n+1}(k)-\wh{\mu}_n(k)| \\
&\le\sum_{n_0\le n< n_1} Q_{n+1}^{1-\sigma/2}|k|^{-1} +\sum_{n_1\le n< m}Q_{n+1}^{-\sigma/2}\\
&\le 2 Q_{n_1}^{1-\sigma/2} |k|^{-1} + 2 Q_{n_1+1}^{-\sigma/2}\\
&\le 4 Q_{n_1}^{-\sigma/2},
\end{split}
\end{equation}
where in the third line we used that $Q_{n+1}\ge 2 Q_n$ for all $n$. By \eqref{eq:condition-on-M}, given $\delta>0$ it holds that $|k|\le Q_{n_1}^{1+\delta}$ for all sufficiently large $k$, whence
\[
|\widehat{\mu}_m(k)| \le |\widehat{\mu}_{n_0}(k)| + 4|k|^{-(1+\delta)\sigma/2}
\]
provided $k$ is sufficiently large. Noting that $|\widehat{\mu}_{n_0}(k)|\le C'(n_0)|k|^{-1}$ and letting $m\to\infty$ finishes the proof.
\end{proof}

\subsection{Behrend's example for unions of intervals}

In the following simple lemma, we adapt Behrend's classical example to a setting more suitable for our needs. Given $m\in\N, j\in\Z/m\Z$, let $\mathcal{I}_{m,j}=[j/m,(j+1)/m)\subset\R/\Z$. Moreover, given $X\subset\Z/m\Z$, we write $\mathcal{I}(X) = \cup_{j\in X} \mathcal{I}_{m,j}\subset \R/\Z$ for the union of the $m$-adic intervals corresponding to $X$.
\begin{lemma} \label{lem:Behrend}
For any $\e>0$, there exist an integer $m_0$, such that for all $m\ge m_0$ there is a set $X\subset\Z/m\Z$ satisfying:
\begin{enumerate}
\item[(i)] $|X|> m^{1-\e}$.
\item[(ii)] If $\{x,y,z\}\subset \mathcal{I}(X+\ell)$ is a $3$-$\AP$ for some $\ell\in \Z/m\Z$, then there is $j\in X$ such that $\{ x,y,z\}\subset \mathcal{I}_{m,j+\ell}$.
\end{enumerate}
\end{lemma}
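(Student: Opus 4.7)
The plan is to translate the conclusion of (ii) into a purely arithmetic condition on $X$, and then to build $X$ as a dilated Behrend set whose structure forbids that condition.

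First, the reduction. Suppose $\{x,y,z\}\subset \mathcal{I}(X+\ell)$ is a $3$-$\AP$ on the circle, with $y$ the middle point, so $x+z\equiv 2y\pmod 1$. Let $a,b,c\in X+\ell$ be the indices with $x\in\mathcal{I}_{m,a}$, $y\in\mathcal{I}_{m,b}$, $z\in\mathcal{I}_{m,c}$, and write $x=a/m+\xi$ with $\xi\in[0,1/m)$, similarly for $y,z$. A short case analysis (according to whether $x+z<1$ or $x+z\ge 1$, noting that $x+z<2$) shows that $a+c-2b\equiv\delta\pmod m$ for some $\delta\in\{-1,0,1\}$. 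This congruence is invariant under the shift $a\mapsto a-\ell$, so it suffices to construct $X\subset\Z/m\Z$ with $|X|>m^{1-\e}$ such that the only solutions of $u+w-2v\equiv\delta\pmod m$ with $u,v,w\in X$ and $\delta\in\{-1,0,1\}$ are the diagonal $u=v=w$.

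Second, the construction. Set $N=\lfloor m/7\rfloor$, and for $N$ large apply Behrend's theorem to produce a set $B\subset\{0,1,\ldots,N-1\}$ with no non-trivial $3$-$\AP$ in $\Z$ and $|B|\ge N^{1-\e/2}$. Let $X=\{3b:b\in B\}$; then $X\subset\{0,3,\ldots,3(N-1)\}\subset\Z/m\Z$ and $|X|\ge (m/7)^{1-\e/2}\ge m^{1-\e}$ for $m$ large enough. To verify the arithmetic condition, suppose $u,v,w\in X$ satisfy $u+w-2v\equiv\delta\pmod m$ with $\delta\in\{-1,0,1\}$. Since $u,v,w\in[0,3(N-1)]$, the integer $D=u+w-2v$ satisfies $|D|\le 6(N-1)<m$, so $|D-\delta|<m$ forces $D=\delta$. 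But $D\in 3\Z$ and $|\delta|\le 1$ force $\delta=0$, hence $u+w=2v$ in $\Z$; dividing by $3$, the no-$3$-$\AP$ property of $B$ gives $u=v=w$.

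The only delicate point is the arithmetic reduction at the start: on the circle, $x+z\equiv 2y\pmod 1$ allows one wrap-around, which produces the \emph{twisted} possibility $a+c-2b\equiv\pm 1\pmod m$ alongside the classical $a+c-2b\equiv 0$. The factor of $3$ in the definition of $X$ is inserted precisely so that such twisted relations become incompatible with $X\subset 3\Z$; everything else is routine bookkeeping.
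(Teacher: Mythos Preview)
Your proof is correct and follows essentially the same strategy as the paper's: take a Behrend set and dilate it by a small integer factor so that progressions in $\mathcal{I}(X)$ are forced into a single basic interval. The paper dilates by $2$ (with $m'=\lfloor m/5\rfloor$) and argues directly that the midpoint of points in $\mathcal{I}_{m,2j}$ and $\mathcal{I}_{m,2k}$ lands in $\mathcal{I}_{m,j+k}$, producing a forbidden $3$-$\AP$ $\{2j,j+k,2k\}$ in $X$; you dilate by $3$ (with $N=\lfloor m/7\rfloor$) and phrase the same obstruction arithmetically via $u+w-2v\in 3\Z\cap\{-1,0,1\}=\{0\}$. These are two presentations of the same idea---indeed, your divisibility argument would go through equally well with the factor $2$ the paper uses.
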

\begin{proof}
Let $m'=\lfloor m/5\rfloor$. Behrend \cite{Behrend46} (see also \cite{Elkin11} for a recent improvement) constructed a set $X'\subset \{1,\ldots,m'\}$ with no $3$-$\AP$s and
\[
|X'|\ge m'\exp(-c\sqrt{\log m'}),
\]
where $c>0$ is an absolute constant.

Let $X=2 X'= \{2x:x\in X'\}$ embedded in $\Z/m\Z$. It is clear that (i) holds if $m$ is large enough depending on $\e$. For the second part, we may assume $\ell=0$ as translating the set does not affect the claim. Suppose $x<y<z$ forms a $3$-$\AP$ in $\mathcal{I}(X)$ such that $x\in \mathcal{I}_{m,2j}, z\in \mathcal{I}_{m,2k}$ with $j\neq k$. Write $x=(2j+\delta)/m$, $z=(2k+\delta')/m$, with $0\le \delta,\delta'< 1$. Then
\[
y = \frac{x+z}{2} = (j+k) + \frac{\delta+\delta'}{2}.
\]
Thus $y\in \mathcal{I}_{m,j+k}\cap \mathcal{I}(X)$, but this implies that $\{ 2j,j+k,2k\}$ is a non-trivial $3$-$\AP$ in $X$, which is impossible by construction.
\end{proof}

\subsection{Proof of main results}

By iterating the set transformation $[0,1]\mapsto \mathcal{I}(X)$, one can construct a self-similar set of dimension $>1-\e$ with no progressions, but such self-similar sets are as far as possible from being Salem (they do not support any measure whose Fourier transform tends to $0$ at infinity). The main idea of this paper is to modify the construction of this self-similar set by translating the set $X$ by a uniformly chosen $\ell\in \Z/m\Z$ every time an interval is split, with all the choices independent. This produces a measure that falls into the framework of Theorem \ref{thm:Salem} but still has an $\AP$-free support.

\begin{proof}[Proof of Theorem \ref{mainthm:dimless1}]
Fix $t\in (0,1)$, and apply Lemma \ref{lem:Behrend} with $\e=1-t$ to obtain $M\in\N$ and a set $X\subset\Z/M\Z$ satisfying the conditions of the lemma, so that in particular $M^t < |X| < M$.

Let $L_n$ be the following sequence: $L_1=|X|$. Once $L_n$ is defined, set $L_{n+1}=|X|$ if $L_1\cdots L_n < M^{(n+1)t}$ and $L_{n+1}=1$ otherwise. Then
\begin{equation} \label{eq:symbolic-AR}
M^{nt} \le L_1\cdots L_n < |X| M^{nt}
\end{equation}
for all $n$. We also set $M_n\equiv M$.

Let $\{\ell_\jjj:\jjj\in\Sigma^*\}$ be a sequence of independent random variables, distributed uniformly in $[M]$, and set
\[
X_\jjj = \left\{    \begin{array}{ll}
              X+\ell_\jjj \bmod M & \text{ if } L_{n+1}=|X| \\
              \{ \ell_\jjj\} & \text{ if } L_{n+1}=1
            \end{array}
\right..
\]
It is clear that the $X_\jjj$ are independent (since the $\ell_\jjj$ are), and satisfy properties \eqref{it:size}-\eqref{it:martingale}.

Let $\mu_n$ and $\mu$ be defined from this data as in Section \ref{subsec:Salem}. We will check that $\mu$ satisfies the required properties.

Note that \eqref{eq:condition-on-M} holds trivially in this case. Also,
\[
\lim_{n\to\infty} \frac{\log(L_1\cdots L_n)}{\log(M_1\cdots M_n)} = t
\]
by \eqref{eq:symbolic-AR}. Theorem \ref{thm:Salem} then asserts that the Salem condition $(\mathsf{i})$ holds (more precisely, take a sequence $\sigma_j\uparrow t$, and apply Theorem  \ref{thm:Salem} to each $\sigma_j$).

Given $r\in (0,1]$, pick $n$ such that $M^{-n} \le r < M^{1-n}$. Since $(x-r,x+r)$ hits at most $(2M+1)$ intervals $I_\jjj, \jjj\in\Sigma_n$, we see from \eqref{eq:symbolic-AR}  that
\[
 \mu(x-r,x+r) \le (2M+1)P_n^{-1} \le (2M+1) r^t.
\]
On the other hand, if $x\in\supp(\mu)$, then $(x-r,x+r)$ contains one interval $I_\jjj$ with $\jjj\in\Sigma_n$, so
\[
 \mu(x-r,x+r) \ge (L_1\cdots L_n)^{-1} \ge \frac{1}{M^t |X|} \,r^t.
\]
This shows that the Ahlfors-regularity condition \textsf{(ii)} holds.

It remains to see that $\supp(\mu)$ contains no $3$-$\AP$s. Firstly, we note that almost surely $\supp(\mu)$ does not contain any endpoint of an interval $I_\jjj$, since there are only countably many such endpoints, and each of them has probability zero of surviving. Now if $x<y<z$ is a $3$-$\AP$ in $\supp(\mu)$ where none of the points are endpoints, we can pick a minimal interval  $I_\jjj, \jjj\in\Sigma^*$ such that $\{x,y,z\}\subset I_\jjj$. By minimality, $L_n=|X|$ where $n$ is the length of $\jjj$. By construction,
\[
\bigcup\{ I_{\jjj a}:a\in X_\jjj\}
\]
is an affine copy of $\mathcal{I}(X+\ell_\jjj \bmod M)$. Lemma \ref{lem:Behrend} then says that $\{x,y,z\}\subset I_{\jjj a}$ for some $a$, but this contradicts the minimality of $I_\jjj$.

This contradiction establishes \textsf{(iii)} and finishes the proof.
\end{proof}

\begin{proof}[Proof of Theorem \ref{mainthm:dimeq1}]
The proof is very similar to that of Theorem \ref{mainthm:dimless1}. For each $n\in\N$, let $X^{(n)}\subset \Z/n\Z$ have maximal size satisfying the second part of Lemma \ref{lem:Behrend}. Then, by the first part of the lemma, $\log|X^{(n)}|/\log n\to 1$ as $n\to\infty$.

We define the sequences $L_n=|X^{(n)}|$, $M_1=2$, $M_n=n$ if $n\ge 2$. Then
\begin{equation} \label{eq:measure-full-dim}
\lim_{n\to\infty} \frac{\log(L_1\cdots L_n)}{\log(M_1\cdots M_n)} = 1,
\end{equation}
and \eqref{eq:condition-on-M} holds.

 Let $\{\ell_\jjj:\jjj\in\Sigma^*\}$ be a sequence of independent random variables, with $\ell_\jjj$ distributed uniformly in $[n+1]$ for $\jjj\in\Sigma_n$. We set
\[
X_\jjj = X^{(n+1)} + \ell_\jjj \bmod (n+1)
\]
for $\jjj\in\Sigma_n$, and note that the $X_\jjj$ are independent and satisfy \eqref{it:size} and \eqref{it:martingale}.

Let $\mu_n$ and $\mu$ be as in Section \ref{subsec:Salem}. We have checked that the hypotheses of Theorem \ref{thm:Salem} hold and, by \eqref{eq:measure-full-dim}, the theorem can be applied to a sequence $\sigma_j \uparrow 1$. This establishes the first claim.

For the second claim, note that (by construction) given $\e>0$, there is  $C_\e>0$ such that $|X^{(n)}|\ge C_\e n^{1-\e}$ for all $n$. Now given $r\in (0,1]$, pick $n$ with
\[
\frac{1}{(n+1)!} \le r < \frac{1}{n!}.
\]
Then $(x-r,x+r)$ meets at most $2$ intervals $I_\jjj, \jjj\in \Sigma_n$, so
\begin{align*}
\mu(x-r,x+r)&\le 2 (|X^{(1)}|\cdots |X^{(n)}|)^{-1} \\
&\le 2 C_\e^{-n} (n!)^{\e-1} \le ((n+1)!)^{2\e-1} \le r^{1-2\e},
\end{align*}
provided $n$ is large enough (so that $r$ must be small enough). This shows that the Frostman condition $(\mathsf{ii})$ holds.

The fact that $\supp(\mu)$ has no progressions follows exactly as in the proof of Theorem \ref{mainthm:dimless1}: if $x<y<z$ was such a progression, and $I_\jjj$ was the smallest interval in the construction containing $\{x,y,z\}$, it would follow from Lemma \ref{lem:Behrend} that $\{x,y,z\}\in I_{\jjj a}$ for some $a$, contradicting minimality.
\end{proof}


\end{document}